\newcommand{\myauthor}{Benjamin Antieau and Jeremiah Heller}
\newcommand{\mytitle}{Some remarks on topological K-theory of dg categories}
\title{Some remarks on topological $K$-theory of dg categories}
\author{Benjamin Antieau\footnote{Benjamin Antieau was supported
by NSF Grant DMS-1552766.}~ and Jeremiah Heller\footnote{Jeremiah Heller was supported by NSF Grant  DMS-1710966.}}
\date{\today}
\definecolor{todo}{rgb}{1,0,0}
\definecolor{conditional}{rgb}{0,1,0}
\definecolor{e-mail}{rgb}{0,.40,.80}
\definecolor{reference}{rgb}{.20,.60,.22}
\definecolor{mrnumber}{rgb}{.80,.40,0}
\definecolor{citation}{rgb}{0,.40,.80}
\definecolor{soldierblue}{RGB}{55,83,115}
\definecolor{aquamarine}{RGB}{102,205,170}
\let\oldmarginpar\marginpar
\renewcommand\marginpar[1]{\-\oldmarginpar[\raggedleft\footnotesize #1]%
{\raggedright\footnotesize #1}}
\newcommand{\Cscr}{\mathcal{C}}
\newcommand{\Dscr}{\mathcal{D}}
\newcommand{\Pscr}{\mathcal{P}}
\newcommand{\Sscr}{\mathcal{S}}
\newcommand{\Tscr}{\mathcal{T}}
\newcommand{\C}{\mathrm{C}}
\newcommand{\K}{\mathrm{K}}
\renewcommand{\L}{\mathrm{L}}
\renewcommand{\P}{\mathrm{P}}
\newcommand{\W}{\mathrm{W}}
\renewcommand{\AA}{\mathds{A}}
\newcommand{\CC}{\mathds{C}}
\newcommand{\PP}{\mathds{P}}
\newcommand{\sm}{\mathrm{sm}}
\newcommand{\aff}{\mathrm{aff}}
\renewcommand{\top}{\mathrm{top}}
\newcommand{\nc}{\mathrm{nc}}
\newcommand{\op}{\mathrm{op}}
\newcommand{\cn}{\mathrm{cn}}
\newcommand{\st}{\mathrm{st}}
\newcommand{\sst}{\mathrm{sst}}
\newcommand{\semi}{\mathrm{semi}}
\newcommand{\Top}{\Tscr\mathrm{op}}
\newcommand{\Sp}{\mathcal{S}\mathrm{p}}
\newcommand{\wkeq}{\simeq}
\renewcommand{\geq}{\geqslant}
\newcommand{\KGL}{\mathrm{KGL}}
\renewcommand{\Re}{\mathrm{Re}}
\newcommand{\Cat}{\mathrm{Cat}}
\newcommand{\KH}{\mathrm{KH}}
\newcommand{\ku}{\mathrm{ku}}
\newcommand{\KU}{\mathrm{KU}}
\DeclareMathOperator{\Hom}{Hom}
\newcommand{\Map}{\mathrm{Map}}
\DeclareMathOperator{\Shv}{Shv}
\newcommand{\Perf}{\mathrm{Perf}}
\DeclareMathOperator{\Sing}{Sing}
\newcommand{\Aff}{\mathrm{Aff}}
\newcommand{\Sch}{\mathrm{Sch}}
\newcommand{\Sm}{\mathrm{Sm}}
\DeclareMathOperator*{\colim}{colim}
\newcommand{\Nis}{\mathrm{Nis}}
\DeclareMathOperator{\Spec}{Spec}
\newcommand{\we}{\simeq}
\newcommand{\iso}{\cong}
\theoremstyle{plain}
\newtheorem{theorem}{Theorem}[section]
\newtheorem*{theorem*}{Theorem}
\newtheorem{lemma}[theorem]{Lemma}
\newtheorem{proposition}[theorem]{Proposition}
\newtheoremstyle{named}{}{}{\itshape}{}{\bfseries}{.}{.5em}{#1 \thmnote{#3}}
\theoremstyle{named}
\theoremstyle{definition}
\newtheorem{definition}[theorem]{Definition}
\newtheorem{question}[theorem]{Question}
\newtheorem{remark}[theorem]{Remark}
\newcommand{\ul}[1]{\underline{\smash{#1}}}
\begin{document}

\maketitle

\begin{abstract}
    \noindent
    Using techniques from motivic homotopy theory, we prove a
    conjecture of Anthony Blanc about semi-topological $K$-theory of dg categories
    with finite coefficients. Along the way, we show that the connective semi-topological
    $K$-theories defined by Friedlander-Walker and by Blanc agree for
    quasi-projective complex varieties and we study \'etale descent of
    topological $K$-theory of dg categories.

    \paragraph{Key Words.} Semi-topological $K$-theory, motivic homotopy
    theory, dg categories.

    \paragraph{Mathematics Subject Classification 2010.}
    Primary:
    \href{http://www.ams.org/mathscinet/msc/msc2010.html?t=14Fxx&btn=Current}{14F42},
    \href{http://www.ams.org/mathscinet/msc/msc2010.html?t=19Dxx&btn=Current}{19D55},
    \href{http://www.ams.org/mathscinet/msc/msc2010.html?t=19Exx&btn=Current}{19E08}.\\
    Secondary:
    \href{http://www.ams.org/mathscinet/msc/msc2010.html?t=14Fxx&btn=Current}{14F20},
    \href{http://www.ams.org/mathscinet/msc/msc2010.html?t=16Exx&btn=Current}{16E45},
    \href{http://www.ams.org/mathscinet/msc/msc2010.html?t=55Nxx&btn=Current}{55N15}.
\end{abstract}

\section{Introduction}\label{sec:intro}

Blanc defines~\cite{blanc} semi-topological and
topological $K$-theory functors
$$\K^{\st},\K^\top:\Cat_\CC\rightarrow\Sp,$$
where $\Cat_\CC$ denotes the $\infty$-category of small idempotent complete
pretriangulated dg categories over $\CC$\footnote{By work of L. Cohn~\cite{cohn}, $\Cat_\CC$ is
equivalent to the $\infty$-category of small idempotent complete $\CC$-linear
stable $\infty$-categories.} ($\CC$-linear dg categories in this paper for short) and $\Sp$ is the $\infty$-category of
spectra. When $\Cscr$ is a $\CC$-linear dg category, there are natural maps
$\K(\Cscr)\rightarrow\K^{\st}(\Cscr)\rightarrow\K^\top(\Cscr)$. Moreover,
$\K^\st(\Cscr)$ is a $\ku$-module spectrum, and, by definition,
$$\K^\top(\Cscr)\we\K^\st(\Cscr)[\beta^{-1}]\we\K^\st(\Cscr)\otimes_{\ku}\KU,$$
where $\beta\in\pi_2\ku$ is the Bott element.

Let $\Sch_\CC$ denote the category of separated $\CC$-schemes of finite type.
If $F:\Cat_\CC\rightarrow\Sp$ is a functor and $\Cscr$ is a $\CC$-linear
dg category,
then we define a presheaf
$\ul{F}(\Cscr):\Sch_\CC^\op\rightarrow\Sp$ by the formula
$\ul{F}(\Cscr)(X)\we F(\Perf_X\otimes_\CC\Cscr)$.
In other words, $\ul{F}(\Cscr)$ is the composition
of the functor
$\Perf:\Sch^\op_\CC\rightarrow\Cat_\CC$, the endofunctor
$$\Cat_\CC\rightarrow\Cat_\CC\qquad\Dscr\mapsto\Dscr\otimes_\CC\Cscr,$$
and the functor $F:\Cat_\CC\rightarrow\Sp$.
In many cases, we will use the restriction of
$\ul{F}(\Cscr)$ to $\Aff_\CC^\op$, $\Sm_\CC^\op$, or
$\Aff_\CC^{\sm,\op}\we\Sm_\CC^{\aff,\op}$
the opposites of the categories of affine $\CC$-schemes of finite type,
smooth separated $\CC$-schemes of finite type, and smooth affine $\CC$-schemes, respectively.

In this paper, we prove three theorems about semi-topological and
topological $K$-theory of dg categories. First, we prove that
$\K^\sst(X)\we\K^{\cn,\st}(\Perf_X)$ when $X$ is a quasi-projective complex variety,
where $\K^\sst(X)$ is the semi-topological $K$-theory spectrum defined by
Friedlander and Walker in~\cite{friedlander-walker-comparing} and
$\K^{\cn,\st}(\Perf_X)$ is the connective version of Blanc's semi-topological
$K$-theory. Second, we
prove a conjecture of Blanc, stating that $\K(\Cscr)/n\we\K^\st(\Cscr)/n$ for
$n\geq 1$ and any $\CC$-linear dg category $\Cscr$. Third, we prove that
$\ul{\K}^\top(\Cscr)$ is $\AA^1$-invariant and a hypersheaf for the \'etale
topology on $\Sm_\CC$ for any $\CC$-linear dg category $\Cscr$.
Put together, the last two theorems imply that $\ul{\K}(\Cscr)/n$ satisfies
\'etale hyperdescent after inverting the Bott element.

The first theorem has
also been obtained by Blanc and Horel and they also made progress toward the
second theorem along the same lines as the argument we give.

\paragraph{Acknowledgments.} BA would like to thank Tasos Moulinos for
patiently explaining Blanc's work on semi-topological $K$-theory to him on
several occasions. Both authors express their gratitude to Anthony Blanc for
his email comments on this topic and for looking over a preliminary version of
the paper. They also are grateful to a careful referee who made several nice
suggestions for improvements. Finally, this paper would probably not be
possible without the dogs Boschko and Lima, who created the opportunity for the
authors to work together.

\section{Comparison of semi-topological $K$-theories}\label{sec:comparison}

The original definition of semi-topological $K$-theory is for complex varieties
and goes back to work of Friedlander and
Walker~\cites{friedlander-walker,friedlander-walker-comparing}.
They construct spectra $\K^\semi(X)$ and $\K^\sst(X)$ when $X$ is
quasi-projective and they give a natural map $\K^\sst(X)\rightarrow\K^\semi(X)$. When $X$ is projective and
weakly normal, this map is an equivalence
by~\cite{friedlander-walker-comparing}*{Theorem~1.4}. In their
survey~\cite{friedlander-walker-handbook}, they settle on $\K^\sst(X)$ as the
`correct' definition of semi-topological $K$-theory of quasi-projective complex
varieties. It is natural to wonder about the
relationship between $\K^{\sst}(X)$ and $\K^{\st}(\Perf_X)$. We prove that
they are in fact equivalent. Blanc has communicated to us that he was
aware of this fact, although it was open at the time of~\cite{blanc}.

We recall the definition of semi-topological $K$-theory of dg categories
from~\cite{blanc}. Let $\Sch_\CC\rightarrow\Pscr_{\Sp}(\Sch_\CC)$ be the
spectral Yoneda functor, where $\Pscr_{\Sp}(\Sch_\CC)$ is the stable
presentable $\infty$-category
of presheaves of spectra on $\Sch_\CC$. Let
$\Sch_\CC\rightarrow\Sp$ be the composition of $$\Sch_\CC\rightarrow\Sscr\qquad
X\mapsto \Sing{X(\CC)}$$ with the suspension spectrum functor
$\Sigma^\infty_+:\Sscr\rightarrow\Sp$, where $\Sscr$ denotes the
$\infty$-category of topological spaces. Define the topological realization
$\Re:\Pscr_{\Sp}(\Sch_\CC)\rightarrow\Sp$ as the left Kan extension
\begin{equation}\label{eq:2}
    \xymatrix{
\Sch_\CC\ar[rr]^{X\mapsto\Sigma^\infty \Sing{X(\CC)}_+}\ar[d] &&   \Sp\\
\Pscr_{\Sp}(\Sch_\CC).\ar@{.>}[urr]^{\rm Re}&&
}\end{equation} Given $\Cscr\in\Cat_\CC$, there is the presheaf $\ul{\K}(\Cscr)$ as defined
in Section~\ref{sec:intro}, where $\K:\Cat_\CC\rightarrow\Sp$ denotes
nonconnective $K$-theory as defined for example in~\cite{bgt1}.

\begin{definition}[Blanc~\cite{blanc}]
    The semi-topological $K$-theory of
    $\Cscr$ is the spectrum $\K^\st(\Cscr)=\Re(\ul{\K}(\Cscr))$.
    More generally, let $f:X\rightarrow\Spec\CC$ be a separated $\CC$-scheme of
    finite type and let $\Sch_X$ be the category of separated $X$-schemes of
    finite presentation. There is an adjunction
    $f^*:\Pscr_{\Sp}(\Sch_\CC)\rightleftarrows\Pscr_{\Sp}(\Sch_X):f_*$ defined
    in the usual way. We let $\ul{\K}^\st(\Cscr):\Sch^\op\rightarrow\Sp$ be the presheaf
    with value at $f:X\rightarrow\CC$ given by
    $\ul{\K}^\st(\Cscr)(X)=\Re(f_*f^*\ul{\K}(\Cscr))$. In particular,
    $\ul{\K}^\st(\Cscr)(X)\we\K^\st(\Perf_X\otimes_\CC\Cscr)$ and
    $\ul{\K}^\st(\Cscr)(\Spec\CC)\we\K^\st(\Cscr)$.
\end{definition}

\begin{definition}[Blanc~\cite{blanc}]
    If we apply the same construction with connective $K$-theory $\K^\cn$ we
    obtain a connective version of semi-topological $K$-theory, namely
    $\K^{\cn,\st}(\Cscr)=\Re(\ul{\K}^{\cn}(\Cscr))$, where $\ul{\K}^\cn(\Cscr)$
    is the presheaf of connective spectra
    $\ul{\K}^\cn(\Cscr)(X)\we\K^{\cn}(\Perf_X\otimes_{\CC}\Cscr)$.
    This is the theory denoted by $\ul{\tilde{\K}}(\Cscr)$ in Blanc's paper.
\end{definition}

The following theorem has also been obtained by Blanc and Geoffroy Horel (private communication).

\begin{theorem}\label{thm:comparison}
    If $X$ is a quasi-projective complex variety, then there is a natural equivalence
    $\K^{\sst}(X)\we\K^{\cn,\st}(\Perf_X)$.
\end{theorem}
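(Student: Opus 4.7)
My strategy is to realize both sides as the topological realization of a common simplicial spectrum built from connective $K$-theory of $X$ times algebraic simplices, and then verify the comparison level by level using Blanc's affine results.

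First, I would unpack Blanc's construction. Since $\Re$ is the left Kan extension along the spectral Yoneda embedding of $Y \mapsto \Sigma^\infty_+\Sing(Y(\CC))$, for any presheaf of spectra $F$ one has $\Re(F) \simeq F \otimes_{\Pscr_{\Sp}(\Sch_\CC)} \Sigma^\infty_+\Sing(-)(\CC)$. Because $Y \mapsto \Sigma^\infty_+\Sing(Y(\CC))$ sends $Y \times \AA^1 \to Y$ to an equivalence ($\AA^1(\CC)=\CC$ is contractible), $\Re$ factors through the $\AA^1$-localization on $\Pscr_{\Sp}(\Sch_\CC)$. Restricted to smooth affines, the $\AA^1$-localization of $\ul{\K}^{\cn}(\Perf_X)$ is computed by the Bousfield-Kan formula $U \mapsto |[n] \mapsto \K^{\cn}(\Perf_{X \times U \times \Delta^n_\CC})|$, and applying $\Re$ to this then yields a simplicial-spectrum model for $\K^{\cn,\st}(\Perf_X)$ whose $n$-th simplicial level is a suitable ``topologization'' of $\K^{\cn}(\Perf_{X \times \Delta^n_\CC})$ governed by the functor $Y \mapsto \Sing(Y(\CC))$.

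Second, I would recall Friedlander--Walker's definition from \cite{friedlander-walker-comparing}: by construction, $\K^{\sst}(X)$ is the geometric realization of the simplicial spectrum $[n] \mapsto \K^{\cn,\topo}(X \times \Delta^n_\CC)$, where the topologization is the explicit Segal-style analytic topology on morphism sets into an ind-Grassmannian model for $\K^{\cn}$. A natural comparison map between the two presentations exists, and the content of the theorem is that it is a levelwise equivalence on the corresponding simplicial spectra before realization.

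The main obstacle is matching the two topologizations: Blanc's is implicit through a universal left Kan extension, while Friedlander--Walker's is the concrete analytic topology on morphism schemes. Blanc established the comparison for smooth affine varieties in \cite{blanc}, so what remains is to promote this to all quasi-projective $X$. I would accomplish this by combining Zariski (or Nisnevich) descent of both $\K^{\sst}$ and $\K^{\cn,\st}(\Perf_{(-)})$ with Blanc's affine comparison, and the naturality in $X$ is then automatic from the functoriality of both constructions. A secondary subtlety is that quasi-projective varieties need not be smooth, so one has to verify that the descent argument is robust enough to handle possibly singular $X$; this should follow once one works with $\Perf_X$ rather than $X$ itself at the appropriate step, since the simplicial formula above is phrased in terms of perfect complexes.
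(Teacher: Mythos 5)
Your plan diverges from the paper's proof and has two gaps that I believe are fatal as stated. First, the ``affine comparison'' you attribute to Blanc does not exist: Blanc's paper does not compare his realization with Friedlander--Walker's topologization on smooth affines or anywhere else --- the authors explicitly note that this comparison was open at the time of \cite{blanc}. What Blanc proves (his Theorem~3.18) is that his realization of $\ul{\K}(\Cscr)$ may be computed after restriction to smooth schemes, a statement internal to his own construction. So the base case from which you want to bootstrap by descent is precisely the content of the theorem and is not available. Second, the descent step is dubious: to pass from affines to general quasi-projective $X$ you need both $\K^{\sst}(-)$ and $\K^{\cn,\st}(\Perf_{(-)})$ to satisfy Zariski or Nisnevich descent in $X$, including for singular $X$. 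Connective $K$-theory does not satisfy Zariski descent for singular schemes (this is the reason for Bass--Thomason--Trobaugh nonconnective deloopings), the descent statement in Blanc's paper (Theorem~1.1(c)) concerns the nonconnective, localizing-invariant version, and the corresponding property for $\K^{\sst}$ of singular quasi-projective varieties is not off the shelf either. Your closing remark that working with $\Perf_X$ makes the singular case ``robust'' does not address this, since the failure of descent is a failure of connective $K$-theory of perfect complexes itself.

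The paper's proof avoids both issues entirely and makes no levelwise comparison of $K$-theory spectra. It observes that $\K^{\sst}(X)$ is $\widetilde{\Re}_{\Sp}(\ul{\K}^\cn(X))$ for a colimit-preserving functor $\widetilde{\Re}_{\Sp}$ on $\Pscr_{\Sp}(\Sch_\CC)$ (the composite $|-|\circ s_*\circ r^*$), while $\K^{\cn,\st}(\Perf_X)=\Re(\ul{\K}^\cn(X))$ for the colimit-preserving $\Re$. Hence it suffices to identify the two functors on representables $U\in\Sch_\CC$, where one computes $r^*U(\Delta^n_\top)\cong\Hom_{\Top}(\Delta^n_\top,U(\CC))\cong\Sing{U(\CC)}_n$ using Jouanolou's device together with \cite{friedlander-walker}*{Proposition~4.2}. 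This computation makes no reference to $K$-theory and works uniformly for every presheaf, hence for every possibly singular quasi-projective $X$. Note also that Friedlander--Walker's $\K^{\sst}$ is defined by evaluating the left Kan extension $r^*$ on the \emph{topological} simplices $\Delta^\bullet_\top$, not on the algebraic cosimplicial scheme $\Delta^\bullet_\CC$ as in your levelwise description; reconciling those two is exactly where the content lies, and it is what the Jouanolou argument accomplishes. If you want to salvage your approach, the honest route is to prove the identification of the two realization functors on representables, at which point you have reproduced the paper's argument and the $\AA^1$-localization and descent machinery become unnecessary.
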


\begin{proof}
    To begin, we give the definition of $\K^{\sst}(X)$
    after~\cite{friedlander-walker-comparing}*{Definition~1.1}. Let
    $\widetilde{\Top}$ be a small category of topological spaces and continuous
    maps containing at least the essential image of
    $r:\Sch_\CC\xrightarrow{U\mapsto U(\CC)}\Top$ and the topological simplices
    $\Delta^n_\top$. Friedlander and Walker consider the left Kan extension
    $$\xymatrix{
        \Sch_\CC^{\op}\ar[rr]^{\ul{\K}^\cn(X)}\ar[d]_{U\mapsto U(\CC)}    &&   \Sp\\
        \widetilde{\Top}^{\op}\ar[urr]_{r^*\ul{\K}^\cn(X)}.&&
    }$$
    By definition, if $Y$ is a topological space, then
    $$r^*\ul{\K}^\cn(X)(Y)\we\colim_{Y\rightarrow
    U(\CC)}\ul{\K}^\cn(X)(U)\we\colim_{Y\rightarrow
    U(\CC)}\ul{\K}^\cn(X\times_\CC U).$$ Evaluating $r^*\ul{\K}^\cn(X)$ at the
    cosimplicial space $\Delta^\bullet_\top$, we obtain a simplicial spectrum
    $r^*\ul{\K}^\cn(X)(\Delta^\bullet_\top)$. The semi-topological $K$-theory of
    $Y$ is defined to be
    $$\K^{\sst}(X)=|\ul{\K}^\cn(X)(\Delta^\bullet_\top)|.$$
    Note that this process is precisely the composition of the functors
    $$
    \widetilde{\Re}_{\Sp}:\Pscr_{\Sp}(\Sch_\CC)\xrightarrow{r^*}\Pscr_{\Sp}(\widetilde{\Top})\xrightarrow{s_*}\Pscr_{\Sp}(\Delta)\rightarrow\Sp
    $$
    applied to $\ul{\K}^\cn(X)$, where $s$ denotes the inclusion of $\Delta$ into
    $\widetilde{\Top}$ classifying the cosimplicial space
    $\Delta^\bullet_{\top}$ and the final arrow is geometric realization of a
    simplicial spectrum. This composition is the stabilization of
    \begin{equation*}
        \widetilde{\Re}:\Pscr(\Sch_\CC)\xrightarrow{r^*}\Pscr(\widetilde{\Top})\xrightarrow{s_*}\Pscr(\Delta)\xrightarrow{|-|}\Sscr,.
    \end{equation*}
    where $\Pscr(\Delta)$ is the $\infty$-category of presheaves of spaces on
    the simplex category $\Delta$, or in other words the $\infty$-category of
    simplicial spaces, and $|-|$ denotes geometric realization.
    To prove the theorem it suffices to prove that
    $\widetilde{\Re}:\Pscr(\Sch_\CC)\rightarrow\Sscr$ is equivalent to the
    functor $\Pscr(\Sch_\CC)\rightarrow\Sscr$ obtained via the unstable version
    of the left Kan extension in~\eqref{eq:2}:
    $$\xymatrix{
    \Sch_\CC\ar[rr]^{U\mapsto\Sing{U(\CC)}}\ar[d]&&\Sscr\\
    \Pscr(\Sch_\CC).\ar[rru]^{\Re}&&
    }$$
    To prove that $\widetilde{\Re}\we\Re$, note first that both
    functors are left adjoints because $\Pscr(\Sch_\CC)$ is presentable. The
    only thing to check is that $s_*$ preserves colimits, but this follows
    because colimits in presheaf categories are computed pointwise (see for
    example~\cite{htt}*{Corollary~5.1.2.3}). Thus, it suffices to prove that
    the restrictions of $\widetilde{\Re}$ and $\Re$ to $\Sch_\CC$ are equivalent.
    On the one hand, we know that $\Re(U)\we U(\CC)$ in $\Sscr$ for
    $U\in\Sch_\CC$. On the
    other hand, by definition,
    $r^*U(\Delta^n_\top)\we\colim_{\Delta^n_\top\rightarrow V(\CC)}\Hom_{\Sch_\CC}(V,U)$.
    Using Jouanolou's device~\cite{jouanolou}, we see that any map $\Delta^n_\top\rightarrow
    V(\CC)$ factors through $W(\CC)\rightarrow V(\CC)$ where $W(\CC)$ is affine
    and $\W(\CC)\rightarrow V(\CC)$ is a vector bundle torsor. Thus, by~\cite{friedlander-walker}*{Proposition~4.2},
    $r^*U(\Delta^n_\top)\iso\Hom_{\Top}(\Delta^n_{\top},U(\CC))\iso\Sing{U(\CC)}_n$
    and hence $\widetilde{\Re}(U)\we\Sing{U(\CC)}$, as desired.
\end{proof}

\begin{remark}
    A theorem of Friedlander and Walker says that when $X$ is smooth and quasi-projective, 
    $\K^\sst(X)[\beta^{-1}]\we\KU(X(\CC))$, the complex $K$-theory spectrum of
    the space of $\CC$-points of $X$
    (see~\cite{friedlander-walker-handbook}*{Theorem~32}). It follows from the
    theorem that
    $\K^\top(\Perf_X)=\K^{\st}(\Perf_X)[\beta^{-1}]\we\K^{\sst}(X)[\beta^{-1}]\we\KU(X(\CC))$
    when $X$ is smooth and quasi-projective, where
    $\K^{\st}(\Perf_X)\we\K^{\cn,\st}(\Perf_X)$ because $X$ is smooth and
    by~\cite{blanc}*{Theorem~3.18}. This gives a new proof of one of
    the main theorems of Blanc's paper~\cite{blanc}*{Theorem~1.1(b)} in the
    special case of $X$ smooth and quasi-projective. Blanc's theorem says more
    generally that if $X$ is separated and finite type over $\CC$, then
    $\K^\top(\Perf_X)\we\KU(X(\CC))$.
\end{remark}

\begin{remark}
    It is clear that one could have defined a nonconnective version
    $\K^{\nc,\sst}(X)$ of Friedlander and Walker's
    $\K^{\sst}(X)$ simply by replacing connective $K$-theory with nonconnective
    $K$-theory in~\cite{friedlander-walker-comparing}*{Definition~1.1}. If this is done, then the proof of
    Theorem~\ref{thm:comparison} goes through and shows that there are natural
    equivalences $\K^{\nc,\sst}(X)\we\K^{\st}(X)$ for quasi-projective
    complex varieties $X$.
\end{remark}

\section{Blanc's conjecture}\label{sec:conjecture}

Let $\C\subseteq {\rm Sch}_\CC$ be a full subcategory closed under taking
products with $\AA^1_\CC$. Let $\Pscr_{\Sp}^{\AA^1}(\C)\subseteq\Pscr_{\Sp}(\C)$ be the full subcategory
of $\AA^1$-invariant presheaves of spectra, i.e., those $F$ such that the
pullback maps $F(X)\rightarrow F(X\times_\CC\AA^1)$ are equivalences for all $X\in\C$.
The inclusion has a left adjoint,
$\L_{\AA^1}:\Pscr_{\Sp}(\C)\rightarrow\Pscr_{\Sp}^{\AA^1}(\C)$. A map $F\rightarrow G$ is an
$\AA^1$-equivalence if $\L_{\AA^1}F\rightarrow\L_{\AA^1}G$ is an equivalence.
Given a presheaf of spectra $F$ on $\C$, we let $\Sing^{\AA^1}F$ be the presheaf defined by
$$
(\Sing^{\AA^1}F)(X) = \colim_{\Delta}F(X\times_\CC\Delta^{\bullet}_{\CC}),
$$ 
where $\Delta^\bullet_\CC$ is the standard cosimplicial affine scheme. It is a
standard fact that $\Sing^{\AA^1}F$ is $\AA^1$-invariant in the sense that for
every $X\in\C$, the
pullback map $(\Sing^{\AA^1}F)(X)\rightarrow(\Sing^{\AA^1}F)(X\times_\CC\AA^1)$ induced
by the projection $X\times_\CC\AA^1_\CC\rightarrow X$ is an equivalence.
Moreover, $F\rightarrow\Sing^{\AA^1}F$ is an $\AA^1$-equivalence. It follows
that $\Sing^{\AA^1}F\we\L_{\AA^1}F$ for all $F$ and that
if $F$ is $\AA^1$-invariant, then the natural transformation
$F\rightarrow\L_{\AA^1}F$ is an equivalence. For proofs of these facts, 
see~\cite{morel-voevodsky}*{Section~2.3}.
From the natural equivalences $\L_{\AA^1}F\we\Sing^{\AA^1}F$, we see that if $i:\C'\subseteq \C$ is a subcategory
(also closed under taking products with $\AA^1_\CC$) and if $F$ is a presheaf
on $\C$, then $i^*\L_{\AA^1}F =\L_{\AA^1}i^*F$.

Let $\Cscr$ be a $\CC$-linear dg category. Then,
$\L_{\AA^1}\ul{\K}(\Cscr)\we\ul{\KH}(\Cscr)$, where
$\KH:\Cat_\CC\rightarrow\Sp$ is the homotopy $K$-theory of dg categories, as
defined for example in~\cite{tabuada-kh}. If $F:\Sch^\op\to\Sp$ is a presheaf of
spectra, write $F^\st$ for the presheaf with value at $f:X\rightarrow\Spec\CC$
in $\Sch_\CC$ the spectrum $F^\st(X)\we\Re(f_*f^*F)$.

\begin{lemma}
    If $F:\Sch^{\op}\to \Sp$ is a presheaf of spectra, then
    $$
    F^\st\we (\L_{\AA^1}F)^\st\we\L_{\AA^1}(F^\st).
    $$ In particular, if $\Cscr$ is a $\CC$-linear dg category, then 
    $$
    \K^{\st}(\Cscr) \wkeq \KH^{\st}(\Cscr),
    $$
    where $\KH^\st(\Cscr)=\Re(\ul{\KH}(\Cscr))$.
\end{lemma}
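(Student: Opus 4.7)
The plan is to exploit the fact that the topological realization $\Re$ inverts $\AA^1$-equivalences, which reduces both claimed equivalences to presheaf-level manipulations. To see this, note that $\Re$ is a left Kan extension from the small category $\Sch_\CC$ to the presentable $\infty$-category $\Sp$, hence preserves all colimits; so by the universal property of $\L_{\AA^1}$ it suffices to show $\Re$ inverts the generating projections $X\times_\CC\AA^1\to X$. But $\Re(X\times_\CC\AA^1)\we\Sigma^\infty\Sing|X(\CC)\times\CC|_+$ and $X(\CC)\times\CC\to X(\CC)$ is a homotopy equivalence since $\CC$ is contractible.

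To obtain the first equivalence $F^\st\we(\L_{\AA^1}F)^\st$, I would use that for $f:X\to\Spec\CC$ one has $(f_*f^*F)(Y)\we F(Y\times_\CC X)$. Combining the $\Sing^{\AA^1}$ description of $\L_{\AA^1}$ with the identification $(Y\times_\CC\Delta^\bullet_\CC)\times_\CC X\we(Y\times_\CC X)\times_\CC\Delta^\bullet_\CC$, one verifies directly that $\L_{\AA^1}(f_*f^*F)\we f_*f^*\L_{\AA^1}F$ as presheaves on $\Sch_\CC$. In particular $f_*f^*F\to f_*f^*\L_{\AA^1}F$ is an $\AA^1$-equivalence, and applying $\Re$ gives the desired natural equivalence $F^\st(X)\we(\L_{\AA^1}F)^\st(X)$.

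To deduce $F^\st\we\L_{\AA^1}(F^\st)$ it suffices to show $F^\st$ is $\AA^1$-invariant, and by the previous step we may replace $F$ with $\L_{\AA^1}F$ and assume $F$ is already $\AA^1$-invariant. Then for every $X\in\Sch_\CC$ the projection $X\times_\CC\AA^1\to X$ induces a map of presheaves sending $Y$ to $F(Y\times_\CC X)\to F(Y\times_\CC X\times_\CC\AA^1)$, which is a pointwise equivalence since $F$ is $\AA^1$-invariant, so applying $\Re$ yields $F^\st(X)\we F^\st(X\times_\CC\AA^1)$. The particular case of $\K$ and $\KH$ then follows by specializing the first equivalence to $F=\ul{\K}(\Cscr)$ and using the identification $\L_{\AA^1}\ul{\K}(\Cscr)\we\ul{\KH}(\Cscr)$ recorded just before the lemma, evaluated at $\Spec\CC$. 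The step most in need of care is the interchange $\L_{\AA^1}(f_*f^*F)\we f_*f^*\L_{\AA^1}F$, but this reduces to associativity of the fiber product over $\CC$.
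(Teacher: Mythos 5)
Your proposal is correct and follows essentially the same route as the paper: both arguments show that $\Re$ inverts $\AA^1$-equivalences (hence factors through $\L_{\AA^1}$), deduce the first equivalence from compatibility of $f_*f^*$ with $\AA^1$-localization, and obtain the second by checking that $(-)^\st$ preserves $\AA^1$-invariance via $f_*f^*G\we g_*g^*G$ for $g:X\times_\CC\AA^1\to\Spec\CC$. Your explicit verification of the interchange $\L_{\AA^1}(f_*f^*F)\we f_*f^*\L_{\AA^1}F$ using the $\Sing^{\AA^1}$ model just fills in a step the paper leaves implicit.
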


\begin{proof}
    Since $\AA^1_\CC\rightarrow\Spec\CC$ realizes to an equivalence in $\Sp$,
    the realization functor $\Re:\Pscr_{\Sp}(\Sch_\CC)\rightarrow\Sp$ factors
    through the $\AA^1$-localization
    $\Pscr_{\Sp}(\Sch_\CC)\rightarrow\Pscr_{\Sp}^{\AA^1}(\Sch_\CC)$, which is
    modeled concretely by $\L_{\AA^1}$. This proves that
    $F^\st\we(\L_{\AA^1}F)^\st$. If we prove that $(\L_{\AA^1}F)^\st$ is
    $\AA^1$-invariant, then we will have proved that $\L_{\AA^1}(F^\st)\we
    F^\st$. It is enough to prove that
    $\st:\Pscr_{\Sp}(\Sch_\CC)\rightarrow\Pscr_{\Sp}(\Sch_\CC)$ preserves
    $\AA^1$-invariant presheaves. Let $G$ be $\AA^1$-invariant. If
    $f:X\rightarrow\Spec\CC$, then $f_*f^*G\we g_*g^*G$
    where $g:X\times_\CC\AA^1_\CC\rightarrow\Spec\CC$ since $G$ is
    $\AA^1$-invariant. Thus,
    $G^\st(X)\we\Re(f_*f^*G)\we\Re(g_*g^*G)\we G^\st(X\times_\CC\AA^1)$, as
    desired. The second claim follows from the equivalence
    $\ul{\K}^\st(\Cscr)\we\ul{\KH}^\st(\Cscr)$ of presheaves evaluated at $\Spec\CC$.
\end{proof}

Write $\Sp(\CC)$ for the $\infty$-category of motivic $\PP^1$-spectra over
$\CC$, $\ul{\Map}_{\Sp(\CC)}(-,-)$ for the internal mapping object,
and $\Map_{\Sp(\CC)}(-,-)$ for the (classical) mapping spectrum.  
A good reference for $\Sp(\CC)$ in the language of $\infty$-categories is~\cite{robalo}.

\begin{proposition}\label{prop:KGL}
    Let $\Cscr$ be a $\CC$-linear dg category.	
    There is a motivic spectrum $\KGL(\Cscr)\in \Sp(\CC)$ such that 
    $$
    \Map_{\Sp(\CC)}(\Sigma^{\infty}_{\PP^1}X_+,\KGL(\Cscr)) \wkeq \ul{\KH}(\Cscr)(X)
    $$
    for any $X\in \Sm_\CC$.
\end{proposition}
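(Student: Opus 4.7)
The plan is to verify that the presheaf $\ul{\KH}(\Cscr)$ on $\Sm_\CC$ satisfies the three properties that make a presheaf of spectra representable by a motivic $\PP^1$-spectrum: $\AA^1$-invariance, Nisnevich descent, and $\PP^1$-stability via a projective bundle formula. The first is immediate from the equivalence $\L_{\AA^1}\ul{\K}(\Cscr)\we\ul{\KH}(\Cscr)$ recalled just before the proposition.

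For Nisnevich descent, I would argue that a Nisnevich square $U\cup V\to X$ in $\Sm_\CC$ gives a pullback square $\Perf_X\to\Perf_U\times_{\Perf_{U\cap V}}\Perf_V$ in $\Cat_\CC$ (a version of Thomason--Trobaugh excision at the level of dg categories); the functor $-\otimes_\CC\Cscr$ preserves this pullback of idempotent complete stable $\infty$-categories, and $\KH:\Cat_\CC\to\Sp$ is a localizing invariant by~\cite{tabuada-kh}, hence converts the resulting Verdier sequence into a fiber sequence of spectra. For $\PP^1$-stability, I would use Beilinson's semi-orthogonal decomposition $\Perf_{\PP^1_X}\we\Perf_X\oplus\Perf_X$ in $\Cat_\CC$, which is again preserved by $-\otimes_\CC\Cscr$; applying $\KH$ yields the projective bundle formula $\ul{\KH}(\Cscr)(\PP^1_X)\we\ul{\KH}(\Cscr)(X)^{\oplus 2}$, which supplies the requisite Bott-type self-equivalence.

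With these properties in hand, I would invoke Robalo's theorem from~\cite{robalo}: the assignment sending a $\CC$-linear dg category $\Cscr$ to the $\AA^1$-local, Nisnevich-local, $\PP^1$-stable presheaf $\ul{\KH}(\Cscr)$ on $\Sm_\CC$ lifts through the equivalence between such presheaves and $\Sp(\CC)$ to produce $\KGL(\Cscr)\in\Sp(\CC)$ with the desired representability formula built in. Equivalently, one can define $\KGL(\Cscr)$ as the image of $\Cscr$ under Robalo's functor from noncommutative motives over $\CC$ to $\Mod_{\KGL}(\Sp(\CC))\subseteq\Sp(\CC)$. The main obstacle is Nisnevich descent for the tensored-up presheaf $\ul{\KH}(\Cscr)$; once that and the projective bundle formula are established, the assembly into a motivic $\PP^1$-spectrum and the representability statement are formal consequences of Robalo's machinery.
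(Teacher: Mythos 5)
Your proposal is correct and follows essentially the same route as the paper: both establish that $\ul{\KH}(\Cscr)$ is an $\AA^1$-invariant Nisnevich sheaf (via localizing invariance), both use the decomposition $\Perf_{\PP^1_X}\we\Perf_X\oplus\Perf_X$ to get the projective bundle formula, and both assemble the result into a $\PP^1$-spectrum using Robalo's identification of $\Sp(\CC)$ as the $T$-stabilization of $\Sp_{S^1}(\CC)$. The paper is only slightly more explicit, writing down the Bott element $\beta$ and using it as the structure maps of an explicitly periodic $T$-spectrum, which is the ``Bott-type self-equivalence'' you allude to.
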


\begin{proof}
	Below, we use the (nonstandard) notation 
	$T = (\PP^1,\infty)$ for the based scheme and  for the duration of this
    proof write $\PP^1$ only to denote the unbased scheme.

	Write $\Sp_{S^1}(\CC)$ for the $\infty$-category of $\AA^1$-invariant, Nisnevich sheaves of spectra on $\Sm_\CC$. 
	Note that  $\ul{\KH}(\Cscr)\in \Sp_{S^1}(\CC)$. Indeed, it is
    $\AA^1$-invariant by definition and it is a Nisnevich sheaf, because it is the
    restriction of a localizing invariant to $\Sm_\CC$
    (see~\cite{blanc}*{Theorem~1.1(c)}).
	
	By~\cite[Corollary 2.22]{robalo} for example, we have an equivalence
	$$
	\Sp(\CC) \wkeq 
	{\rm Stab}_{T}(\Sp_{S^1}(\CC)) := \lim (\Sp_{S^1}(\CC)
    \xleftarrow{\Omega_{T}} \Sp_{S^1}(\CC)\leftarrow \cdots ).
	$$
Let $\beta \in \pi_{0}\Map_{\Sp_{S^1}(\CC)}(T, \ul{\KH}(\Perf_\CC))\cong \KH_0((\PP^1,\infty))$ be the usual Bott element. Write as well $\beta:\ul{\KH}(\Cscr) \to \Map_{\Sp_{S^1}(\CC)}(T,\ul{\KH}(\Cscr))$ for the ``multiplication by $\beta$'' map, obtained from the  $\ul{\KH}(\Perf_\CC))$-module structure on  
 $\ul{\KH}(\Cscr)$. Now define $\KGL(\Cscr)\in \Sp(\CC)$ to be the ``constant'' spectrum 
    whose value is $\ul{\KH}(\Cscr)$ and structure maps 
    $\beta:\ul{\KH}(\Cscr) \to \Map_{\Sp_{S^1}(\CC)}(T,\ul{\KH}(\Cscr))$.

Since  $\ul{\KH}(\Cscr)$ is a localizing invariant and $\Perf_{\PP^1_X}\wkeq
\Perf_X \oplus \Perf_X$, the projective bundle formula holds in $\ul{\KH}(\Cscr)$:
    $$
    \ul{\KH}(\Cscr)(\PP^1_X)\we\ul{\KH}(\Cscr)(X)\oplus\ul{\KH}(\Cscr)(X)
    $$
    for $X\in\Sch_\CC$. This splitting identifies $\Map_{\Sp_{S^1}(\CC)}(T,\ul{\KH}(\Cscr))$ with $\ul{\KH}(\Cscr)$ via the map $\beta$ defined above. In particular,  we see that $\KGL(\Cscr)$ is a periodic motivic spectrum and $\Omega^{\infty}_{T}(\KGL(\Cscr))\wkeq \ul{\KH}(\Cscr)$.
	It is now immediate that 
    $$
    \Map_{\Sp(\CC)}(\Sigma^{\infty}_{T}X_+,\KGL(\Cscr)) \wkeq 
    \Map_{\Sp_{S^1}(\CC)}(\Sigma^{\infty}_{S^1}X_+,\ul{\KH}(\Cscr)) \wkeq
     \ul{\KH}(\Cscr)(X),
    $$
    for any $X\in \Sm_{\CC}$.

\end{proof}

Now, we prove Blanc's conjecture. Blanc has told us that Horel was exploring a
similar argument.

\begin{theorem}\label{thm:blancsconjecture}
    If $\Cscr$ is a $\CC$-linear dg category, then the natural map
    $\K(\Cscr)/n\rightarrow\K^{\st}(\Cscr)/n$ is an equivalence for any $n\geq 1$.
\end{theorem}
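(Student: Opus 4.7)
My plan is to factor the natural map and handle the two pieces separately. By the preceding lemma, $\K^\st(\Cscr)\we\KH^\st(\Cscr)$, and this persists mod $n$, so it will suffice to show that both maps in the factorization
\[
\K(\Cscr)/n\xrightarrow{\alpha}\KH(\Cscr)/n\xrightarrow{\beta}\KH^\st(\Cscr)/n\we\K^\st(\Cscr)/n
\]
are equivalences. I will handle $\beta$ using the motivic framework of Proposition~\ref{prop:KGL} together with Suslin-Gabber rigidity, and $\alpha$ using the $\QQ$-linearity of the fiber of $\K\to\KH$ for $\QQ$-linear dg categories.

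For $\beta$, I would first transport the problem into $\Sp(\CC)$. Proposition~\ref{prop:KGL} represents $\ul{\KH}(\Cscr)$ on $\Sm_\CC$ by the motivic spectrum $\KGL(\Cscr)$, and Jouanolou's device together with the $\AA^1$-invariance supplied by the lemma allows one to compute $\Re(\ul{\KH}(\Cscr))$ from the restriction of $\ul{\KH}(\Cscr)$ to $\Sm_\CC^{\aff}$. This should yield
\[
\KH^\st(\Cscr)\we\Re^{B}(\KGL(\Cscr))\quad\text{and}\quad\KH(\Cscr)\we\Map_{\Sp(\CC)}(\SS_\CC,\KGL(\Cscr)),
\]
where $\Re^{B}:\Sp(\CC)\to\Sp$ is the symmetric monoidal Betti realization of motivic spectra, with right adjoint $c^{*}$. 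The equivalence of $\beta$ mod $n$ then becomes the assertion that the unit $\KGL(\Cscr)/n\to c^{*}\Re^{B}(\KGL(\Cscr)/n)$ becomes an equivalence after applying $\Map_{\Sp(\CC)}(\SS_\CC,-)$. The key input is the Suslin-Gabber rigidity theorem, which says that $\KGL/n\in\Sp(\CC)$ is \'etale-locally equivalent to $c^{*}(\KU/n)$; because $\KGL(\Cscr)/n$ is a $\KGL/n$-module, the property should propagate to it.

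For $\alpha$, I would invoke a rigidity statement specific to characteristic zero: the fiber of $\K(\Cscr)\to\KH(\Cscr)$ is controlled by the birelative negative cyclic homology of $\Cscr$, which is a $\QQ$-module and hence uniquely $n$-divisible for any $\QQ$-linear $\Cscr$. This is the dg category analogue of the Cortinas-Haesemeyer-Weibel theorem, and it should force the fiber to vanish mod $n$.

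I expect the main obstacle to be step $\beta$: transporting the rigidity of $\KGL/n$ to the non-constant motivic spectrum $\KGL(\Cscr)/n$ in a form strong enough to identify global sections with Betti realization. This is closely tied to the \'etale hyperdescent for $\K^\top$ advertised in the introduction, and the cleanest route I can see is to verify directly that $\ul{\KH}(\Cscr)/n$ is an \'etale hypersheaf on $\Sm_\CC$ with locally constant stalks.
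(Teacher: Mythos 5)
Your overall strategy---factor through $\KH(\Cscr)/n$, use rigidity for the motivic spectrum $\KGL(\Cscr)/n$, and kill the fiber of $\K\to\KH$ mod $n$ using characteristic zero---assembles the same ingredients as the paper, and your step $\alpha$ is fine in substance (the paper instead quotes Tabuada's theorem that $\ul{\K}(\Cscr)/n$ is $\AA^1$-invariant, proved \`a la Weibel via Witt vectors, rather than a cyclic-homology identification of the fiber; both yield $\K(\Cscr)/n\we\KH(\Cscr)/n$). But step $\beta$ as you set it up contains a genuine error: you identify $\KH^{\st}(\Cscr)=\Re(\ul{\KH}(\Cscr))$ with the Betti realization $\Re^{B}(\KGL(\Cscr))$ of the motivic $\PP^1$-spectrum. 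These do not agree. The realization of the $\PP^1$-spectrum $\KGL(\Cscr)$ is the colimit of $\K^{\st}(\Cscr)\xrightarrow{\beta}\Omega^2\K^{\st}(\Cscr)\rightarrow\cdots$, i.e.\ it is $\K^{\top}(\Cscr)=\K^{\st}(\Cscr)[\beta^{-1}]$ (this is exactly the first lemma of Section~4), whereas $\K^{\st}(\Cscr)$ is the realization of the presheaf of spectra $\ul{\KH}(\Cscr)\we\Omega^\infty_T\KGL(\Cscr)$ \emph{before} $T$-stabilization. If your argument for $\beta$ went through as written, it would prove $\K(\Cscr)/n\we\K^{\top}(\Cscr)/n$, which is false already for $\Cscr=\Perf_\CC$: here $\pi_{-2}(\K(\CC)/n)=0$ by regularity while $\pi_{-2}(\KU/n)=\ZZ/n$. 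Relatedly, your proposed ``cleanest route''---verifying that $\ul{\KH}(\Cscr)/n$ is an \'etale hypersheaf with locally constant stalks---cannot work, because $\K/n\we\KH/n$ does not satisfy \'etale hyperdescent on $\Sm_\CC$; that failure is precisely the Quillen--Lichtenbaum phenomenon discussed after Theorem~\ref{thm:descent}. Only Nisnevich hyperdescent is available, and it suffices.

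The repair is to stay unstable in the $\PP^1$-direction. Since $\Re$ on $\Pscr_{\Sp}(\Sm_\CC)$ factors through Nisnevich hypersheafification and sends the constant presheaf at $\K(\Cscr)/n$ to $\K(\Cscr)/n$, it is enough to show that the map from that constant presheaf to $\ul{\K}(\Cscr)/n$ is an equivalence on Nisnevich stalks, i.e.\ that $\K(\Cscr)/n\to\K(R\otimes_\CC\Cscr)/n$ is an equivalence for every essentially smooth Henselian local $\CC$-algebra $R$. This follows from $\K/n\we\KH/n$ together with Hornbostel--Yagunov rigidity applied \emph{directly} to the motivic spectrum $\KGL(\Cscr)/n$: their theorem holds for any motivic spectrum over an algebraically closed field (the normalization hypothesis is automatic), so no appeal to the $\KGL/n$-module structure is needed---and being a module over an \'etale-locally constant ring spectrum would not by itself imply local constancy anyway. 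Note also that the relevant rigidity statement concerns Henselian local rings rather than an \'etale-local equivalence with $c^{*}(\KU/n)$.
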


\begin{proof}
    By~\cite{blanc}*{Theorem~3.18}, we may compute the semi-topological $K$-theory of
    $\Cscr$ using only smooth $\CC$-schemes $\Sm_\CC\subseteq\Aff_\CC$. In
    fact, if $F:\Sch_\CC^\op\rightarrow\Sp$ is any presheaf of spectra, we can
    compute $\Re(F)$ by first restricting $F$ to $\Sm_\CC$ and then using the
    realization $\Re:\Pscr_{\Sp}(\Sm_\CC)\rightarrow\Sp$ given as the left Kan
    extension
    $$\xymatrix{
    \Sm_\CC\ar[rr]^{X\mapsto\Sigma^\infty\Sing{X(\CC)}_+}\ar[d] &&   \Sp\\
    \Pscr_{\Sp}(\Sm_\CC)\ar[urr]^\Re.&&
    }$$
    Let $\mathrm{E}$ 
    denote the constant presheaf on $\Sm_\CC$
    with value $\K(\Cscr)/n$. There is a natural map
    $\mathrm{E}\rightarrow\ul{\K}(\Cscr)/n$. The topological
    realization of $\mathrm{E}$ is $\K(\Cscr)/n$ since it is a
    constant sheaf. As topological realization factors through Nisnevich hypersheaves, it suffices to check that
    $\mathrm{E}\rightarrow\ul{\K}(\Cscr)/n$ induces an
    equivalence after Nisnevich hypersheafification. For this, it suffices to see
    that the natural map $\K(\Cscr)/n\rightarrow\K(R\otimes_\CC\Cscr)/n$ is an
    equivalence for every essentially smooth hensel local ring $R$
    over $\CC$.

    By the proposition above, $\ul{\KH}(\Cscr)$ is represented by a motivic
    spectrum, $\KGL(\Cscr)$. Thus, $\ul{\KH}(\Cscr)/n$ is represented by a
    motivic spectrum denoted $\KGL(\Cscr)/n$.
    Gabber-Suslin rigidity is valid for $\KGL(\Cscr)/n$
    by~\cite{hornbostel-yagunov}*{Corollary~0.4}.
    (As noted in loc.~cit., the normalization property of that result holds for
    any motivic spectrum over an algebraically closed field.) In particular,
    $\KH(R\otimes_\CC\Cscr)/n\rightarrow\KH(R/\mathfrak{m}\otimes_\CC\Cscr)/n\we\K(\Cscr)/n$
    is an equivalence
    for any essentially smooth hensel local ring $R$, where $\mathfrak{m}\subseteq R$ is the maximal
    ideal.  But by a result of
    Tabuada~\cite{tabuada-a1-modl}*{Theorem~1.2(i)}, whose proof essentially follows
    the argument of Weibel in the case of associative
    rings~\cite{weibel-kh}*{Proposition~1.6}, $\ul{\K}(\Cscr)/n$ is
    $\AA^1$-homotopy invariant so that $\ul{\K}(\Cscr)/n\we\ul{\KH}(\Cscr)/n$. It
    follows that
    $\K(R\otimes_\CC\Cscr)/n\rightarrow\K(R/\mathfrak{m}\otimes_\CC\Cscr)/n\we\K(\Cscr)/n$
    is an equivalence so that $\mathrm{E}\rightarrow\ul{\K}(\Cscr)/n$ is
    an equivalence, which is what we wanted to prove.
\end{proof}

\section{Descent for topological $K$-theory of dg catgories}

In this section we prove the following result.

\begin{theorem}\label{thm:descent}
    Let $\Cscr$ be a $\CC$-linear dg category.
    \begin{enumerate}
        \item[{\rm (i)}] The presheaf $\ul{\K}^{\top}(\Cscr):\Sm_\CC^\op\rightarrow\Sp$ satisfies \'etale hyperdescent.
        \item[{\rm (ii)}] The presheaf $\ul{\K}(\Cscr)/n[\beta^{-1}]:\Sm_\CC^\op\rightarrow\Sp$ satisfies \'etale hyperdescent.
    \end{enumerate}        
\end{theorem}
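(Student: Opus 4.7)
The plan is to first reduce (ii) to (i) via Theorem~\ref{thm:blancsconjecture} and then to establish (i) using the motivic representation from Proposition~\ref{prop:KGL} together with Betti realization.

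\medskip

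For (ii), applying Theorem~\ref{thm:blancsconjecture} to $\Perf_X\otimes_\CC\Cscr$ at every $X\in\Sm_\CC$ gives an equivalence $\ul{\K}(\Cscr)/n\simeq\ul{\K}^\st(\Cscr)/n$ of presheaves on $\Sm_\CC$; inverting $\beta$ yields $\ul{\K}(\Cscr)/n[\beta^{-1}]\simeq\ul{\K}^\top(\Cscr)/n$. Since this presheaf sits in the cofiber sequence $\ul{\K}^\top(\Cscr)\xrightarrow{\cdot n}\ul{\K}^\top(\Cscr)\to\ul{\K}^\top(\Cscr)/n$ and limits of presheaves of spectra commute with cofiber sequences, (i) propagates to (ii).

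\medskip

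For (i), I propose to describe $\ul{\K}^\top(\Cscr)$ in terms of a Betti realization functor $\Re^B:\Sp(\CC)\to\Sp$ applied to the motivic spectrum $\KGL(\Cscr)$ from Proposition~\ref{prop:KGL}. Functoriality of the construction in that proposition gives
$\KGL(\Perf_X\otimes_\CC\Cscr)\simeq\underline{\Map}_{\Sp(\CC)}(\Sigma^{\infty}_{T}X_+,\KGL(\Cscr))$,
while combining the Lemma of Section~\ref{sec:conjecture} (that $\Re$ factors through $\AA^1$-localization) with the Bott-periodicity of $\KGL(\Cscr)$ yields $\K^\top(-)\simeq\Re^B(\KGL(-))$. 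Hence
\[
\ul{\K}^\top(\Cscr)(X)\;\simeq\;\Re^B\bigl(\underline{\Map}_{\Sp(\CC)}(\Sigma^{\infty}_{T}X_+,\KGL(\Cscr))\bigr).
\]
When $X$ is smooth projective, $\Sigma^{\infty}_{T}X_+$ is dualizable in $\Sp(\CC)$, and the symmetric monoidality of $\Re^B$ (together with preservation of duals) gives
\[
\ul{\K}^\top(\Cscr)(X)\;\simeq\;\Map_\Sp(\Sigma^\infty_+ X(\CC),\K^\top(\Cscr)).
\]
This last presheaf manifestly satisfies étale hyperdescent: the functor $X\mapsto X(\CC)$ sends étale hypercovers in $\Sm_\CC$ to hypercovers in $\Sscr$ (étale morphisms of complex varieties are local homeomorphisms on $\CC$-points), so mapping from such a hypercover into the fixed spectrum $\K^\top(\Cscr)$ gives a limit diagram.

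\medskip

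To reach arbitrary smooth $X$, I would combine the projective case with Nisnevich descent for $\ul{\K}^\top(\Cscr)$: the latter is available because $\ul{\K}^\top(\Cscr)$ is the restriction of a localizing invariant in the sense of~\cite{blanc}*{Theorem~1.1(c)}, and every smooth $X$ admits a Nisnevich-local presentation through opens in smooth projective compactifications, reducing étale hyperdescent for $\ul{\K}^\top(\Cscr)(X)$ to the smooth projective case. The main obstacle is precisely this non-projective extension, since $\Sigma^{\infty}_{T}X_+$ is dualizable in $\Sp(\CC)$ only when $X$ is projective; the fix leverages the Nisnevich descent that is already known to bootstrap from the dualizable situation, and one must also take care to compatibly identify $\Re$ on $\Pscr_{\Sp}(\Sch_\CC)$ with the Betti realization on $\Sp(\CC)$ when passing between the two frameworks.
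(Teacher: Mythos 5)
Your reduction of (ii) to (i) and your overall strategy for (i) --- representing $\ul{\K}^{\top}(\Cscr)$ by the realization of $\KGL(\Cscr)$ and using dualizability of $\Sigma^{\infty}_{T}X_+$ together with symmetric monoidality of $\Re$ to identify $\ul{\K}^{\top}(\Cscr)(X)$ with $\Map_{\Sp}(\Sigma^{\infty}_+X(\CC),\K^{\top}(\Cscr))$ --- are exactly the paper's. The gap is in your final step. You assert that $\Sigma^{\infty}_{T}X_+$ is dualizable in $\Sp(\CC)$ only for $X$ smooth and projective, and then try to reach general smooth $X$ by a Nisnevich-descent bootstrap. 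That bootstrap does not work as stated: \'etale hyperdescent is a statement about the small \'etale site of each $U\in\Sm_\CC$, and an \'etale hypercover of an open subscheme $U$ of a smooth projective compactification $\bar{X}$ need not extend to an \'etale hypercover of $\bar{X}$, so knowing hyperdescent over projective varieties plus Nisnevich descent does not formally yield hyperdescent over $U$. Nisnevich descent lets you glue values along distinguished squares; it does not transport a local property of the small \'etale site from $\bar{X}$ down to its opens.

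The fix is that your restriction to projective $X$ is unnecessary: over a field of characteristic zero, $\Sigma^{\infty}_{T}X_+$ is dualizable in $\Sp(\CC)$ for \emph{every} smooth $X$ (by resolution of singularities and Gysin triangles; this is Riou's Spanier--Whitehead duality in the stable motivic homotopy category). With that, your identification $\ul{\K}^{\top}(\Cscr)(X)\simeq\Map_{\Sp}(\Sigma^{\infty}_+X(\CC),\K^{\top}(\Cscr))$ holds for all $X\in\Sm_\CC$, and the conclusion is the paper's: a presheaf obtained by restricting a cohomology theory on spaces along $X\mapsto X(\CC)$ satisfies \'etale hyperdescent by Dugger--Isaksen, whose theorem says that realizations of \'etale hypercovers of smooth complex varieties are topological hypercovers and hence induce equivalences after mapping into any spectrum. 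Your sketch of this last point is in the right spirit, but noting that \'etale maps are local homeomorphisms on $\CC$-points only handles \v{C}ech covers; descent for arbitrary hypercovers is the actual content of their result and should be cited or proved.
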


Part (ii) of the theorem is a noncommutative generalization of the main theorem
of Thomason~\cite{thomason}.
Indeed, if $\Cscr\we\Perf_X$ where $X$ is an essentially smooth separated $\CC$-scheme, then
$\ul{\K}(\Perf_X)/n[\beta^{-1}]$ is equivalent to the presheaf
$$Y\mapsto\K(Y\times_\CC X)/n[\beta^{-1}],$$
which satisfies \'etale hyperdescent by~\cite{thomason}*{Theorem~4.1}.
In general, we cannot improve the result to semi-topological $K$-theory.
Indeed, it is well-known that $\ul{\K}^\st(\Perf_X)/n\we\ul{\K}(\Perf_X)/n$
does not satisfy \'etale descent.
The Quillen--Lichtenbaum conjectures (which
follow from the, now proved, Bloch--Kato conjecture) give a bound on the
failure of \'etale hyperdescent for $K$-theory with finite coefficients.
For example, if $X$ is an essentially smooth separated $\CC$-scheme of Krull
dimension $d$, then
$$\K(X)/\ell\rightarrow\K(X)/\ell[\beta^{-1}]$$ is $2d$-coconnective.
(See~\cite{thomason-bott}*{Section~5} for a discussion of the bound $2d$.)
Recall that a map of spectra
$M\rightarrow N$ is $r$-coconnected if the induced map
$\pi_rM\rightarrow\pi_rN$ is an injection and $\pi_sM\rightarrow\pi_sN$ is an
isomorphism for $s>r$. Following the tradition of
proposing noncommutative versions of theorems known for $\CC$-linear dg categories
of the form $\Perf_X$, we ask the following question.

\begin{question}[Noncommutative Quillen-Lichtenbaum]
    If $\Cscr$ is a nice (probably smooth and proper) $\CC$-linear dg category,
    is $$\K(\Cscr)/n\rightarrow\K^{\top}(\Cscr)/n$$ is $r$-coconnective for some $r$.
\end{question}
 
To prove Theorem~\ref{thm:descent},
we make use of the topological realization functor ${\rm Re}:\Sp(\CC) \to \Sp$, extending the functor of taking complex points of a $\CC$-scheme. 
This functor factors through the
localization $\Pscr_{\Sp}(\Sm_\CC)\rightarrow\Shv_{\Sp}^{\Nis,\AA^1}(\Sm_\CC)$,
which is equivalent to $\Sp_{S^1}(\CC)$, the category of motivic $S^1$-spectra.
To see that it factors through the $T$-stabilization functor
$\Sp_{S^1}(\CC)\rightarrow\Sp(\CC)$, it is enough to note that the realization
of $T$ is $S^2$, which is already tensor invertible in $\Sp$. Thus, we have a
commutative diagram
$$\xymatrix{
\Sm_\CC\ar[r]^{X\mapsto\Sigma^\infty_+X(\CC)}\ar[d] &   \Sp&\\
\Pscr_{\Sp}(\Sm_\CC)\ar@{.>}[ur]^{\rm Re}\ar[r] &
\Sp_{S^1}(\CC)\ar@{.>}[u]^{\rm Re}\ar[r]&\Sp(\CC)\ar@{.>}[ul]_{\rm Re}
}$$
of realization functors, and we will abuse notation by not distinguishing them.

\begin{lemma}
    Let $\Cscr$ be a $\CC$-linear dg category. Then, there is an equivalence
    ${\rm Re}(\KGL(\Cscr))\we\K^\top(\Cscr)$.
\end{lemma}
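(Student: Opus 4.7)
The plan is to unpack the construction of $\KGL(\Cscr)$ from Proposition~\ref{prop:KGL} and observe that topological realization converts it, viewed as a $T$-spectrum with every level equal to $\ul{\KH}(\Cscr)$ and bonding maps given by the motivic Bott class $\beta$, into the $S^2$-spectrum with every level $\K^{\st}(\Cscr)$ and bonding maps given by the topological Bott element; this latter spectrum is by definition $\K^{\st}(\Cscr)[\beta^{-1}]=\K^{\top}(\Cscr)$. Two earlier inputs do the heavy lifting: the factorization of $\Re$ through $\Sp(\CC)$ recorded in the diagram above, and the preceding lemma identifying $\Re(\ul{\KH}(\Cscr))\we\K^{\st}(\Cscr)$.

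Concretely, I would proceed as follows. First note that $\Re:\Sp(\CC)\to\Sp$ is a symmetric monoidal left adjoint sending $T=(\PP^1,\infty)$ to $S^2$, since $\PP^1(\CC)\iso S^2$ pointed at infinity. Because $\Re$ is a left adjoint, hence commutes with colimits, and because $S^2$ is already invertible in $\Sp$ (so $T$-stabilization becomes trivial after realization), applying $\Re$ levelwise to $\KGL(\Cscr)$ yields the spectrum
$$\Re(\KGL(\Cscr))\we\colim_{n\geq 0}\Omega^{2n}\K^{\st}(\Cscr),$$
where the transition maps are induced by $\Re(\beta):\K^{\st}(\Cscr)\to\Omega^2\K^{\st}(\Cscr)$, i.e., by an element of $\pi_2\K^{\st}(\Cscr)$. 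Once one identifies $\Re(\beta)$ with the topological Bott class, this colimit is by definition $\K^{\st}(\Cscr)[\beta^{-1}]=\K^{\top}(\Cscr)$, completing the proof.

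The main obstacle is the identification of $\Re(\beta)$ with the topological Bott element. For $\Cscr=\Perf_\CC$, the class $\beta$ is represented by $[\Oscr_{\PP^1}(1)]-[\Oscr_{\PP^1}]\in\KH_0((\PP^1,\infty))$, whose image under Betti realization is the standard generator of $\widetilde{\KU}^0(S^2)\iso\ZZ$, namely the topological Bott class; this is a classical fact about the Betti realization of $\KGL$. For general $\Cscr$, the Bott bonding map on $\ul{\KH}(\Cscr)$ is obtained from the one on $\ul{\KH}(\Perf_\CC)$ via the $\ul{\KH}(\Perf_\CC)$-module structure used in the proof of Proposition~\ref{prop:KGL}, so symmetric monoidality of $\Re$ shows that $\Re(\beta)$ acts on the $\ku$-module spectrum $\K^{\st}(\Cscr)$ as multiplication by the topological Bott element. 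This is the only substantive input beyond what is already set up in the paper.
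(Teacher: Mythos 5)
Your proposal is correct and follows essentially the same route as the paper: realize the periodic $T$-spectrum $\KGL(\Cscr)$ levelwise to an $S^2$-spectrum with levels $\K^{\st}(\Cscr)$ and take the colimit $\colim_n\Omega^{2n}\K^{\st}(\Cscr)$, which is $\K^{\st}(\Cscr)[\beta^{-1}]=\K^{\top}(\Cscr)$. The only difference is that you explicitly justify the identification of $\Re(\beta)$ with the topological Bott element (via the case $\Cscr=\Perf_\CC$ and the $\ul{\KH}(\Perf_\CC)$-module structure), a point the paper's proof treats as immediate.
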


\begin{proof}
    By definition, $\KGL(\Cscr)\in \lim (\Sp_{S^1}(\CC) \xleftarrow{\Omega_T} \Sp_{S^1}(\CC)\leftarrow \cdots)$ 
    is the periodic $T$-spectrum with value $\ul{\KH}(\Cscr)$ and structure
    maps induced by $\beta$:
    $$\ul{\KH}(\Cscr)\xrightarrow{\beta}\Omega_T\ul{\KH}(\Cscr)\xrightarrow{\beta}\Omega_T^2\ul{\KH}(\Cscr)\xrightarrow{\beta}\cdots.$$
    The realization functor $\Re:\Sp(\CC)\rightarrow\Sp$ factors through the
    equivalence $\Sp_{S^2}\Sp\we\Sp$, where $\Sp_{S^2}\Sp$ is the
    $\infty$-category of $S^2$-spectra in spectra. The realization functor
    sends $\KGL(\Cscr)$ to the $S^2$-spectrum
    $$\K^{\st}(\Cscr)\xrightarrow{\beta}\Omega^2\K^{\st}(\Cscr)\xrightarrow{\beta}\Omega^4\K^{\st}\xrightarrow{\beta}\cdots.$$
    The underlying spectrum of this $S^2$-spectrum in spectra is
    ``$\Omega^{2\infty}$'', the colimit of the diagram, which is by definition
    $\K^{\top}(\Cscr)$.
\end{proof}

\begin{lemma}
	If $X\in \Sm_{\CC}$, then
    ${\rm Re}(\ul{\Map}_{\Sp(\CC)}(\Sigma^{\infty}_{\P^1}X_+, \KGL(\Cscr))) \wkeq \ul{\K}^{\top}(\Cscr)(X)$.  
\end{lemma}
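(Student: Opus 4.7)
The plan is to identify the motivic spectrum $\ul{\Map}_{\Sp(\CC)}(\Sigma^{\infty}_{\PP^1}X_+, \KGL(\Cscr))$ with $\KGL(\Perf_X \otimes_\CC \Cscr)$, and then to appeal to the previous lemma, applied to the dg category $\Perf_X \otimes_\CC \Cscr$, to conclude
$$
\Re\bigl(\KGL(\Perf_X \otimes_\CC \Cscr)\bigr) \we \K^{\top}(\Perf_X \otimes_\CC \Cscr) = \ul{\K}^{\top}(\Cscr)(X).
$$

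For the identification, I first observe that both motivic spectra are $T$-periodic. This is immediate from the construction for $\KGL(\Perf_X \otimes_\CC \Cscr)$ in Proposition \ref{prop:KGL}, while for $\ul{\Map}_{\Sp(\CC)}(\Sigma^{\infty}_{\PP^1}X_+, \KGL(\Cscr))$ it follows because the internal mapping functor commutes with $\Omega_T$ and $\KGL(\Cscr)$ is itself $T$-periodic. A $T$-periodic motivic spectrum is determined by its underlying object in $\Sp_{S^1}(\CC)$ together with the Bott structure map. The underlying $S^1$-spectrum of $\KGL(\Perf_X \otimes_\CC \Cscr)$ is $\ul{\KH}(\Perf_X \otimes_\CC \Cscr)$ by construction, while the underlying $S^1$-spectrum of the internal mapping object is $\ul{\Map}_{\Sp_{S^1}(\CC)}(\Sigma^{\infty}_{S^1}X_+, \ul{\KH}(\Cscr))$. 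Evaluating the latter on $Y \in \Sm_\CC$ gives, by Proposition \ref{prop:KGL},
$$
\ul{\KH}(\Cscr)(X \times_\CC Y) \we \KH(\Perf_{X \times_\CC Y} \otimes_\CC \Cscr) \we \KH(\Perf_Y \otimes_\CC \Perf_X \otimes_\CC \Cscr) = \ul{\KH}(\Perf_X \otimes_\CC \Cscr)(Y),
$$
using $\Perf_{X \times_\CC Y} \we \Perf_X \otimes_\CC \Perf_Y$. The Bott structure maps on both sides are induced by multiplication by the same class $\beta \in \pi_0 \Map_{\Sp_{S^1}(\CC)}(T, \ul{\KH}(\Perf_\CC))$ via the $\ul{\KH}(\Perf_\CC)$-module structure, hence they agree.

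Applying the previous lemma then yields the desired equivalence. The main subtlety is simply the bookkeeping needed to ensure the two $T$-periodic motivic spectra agree with all of their structure, not merely that their underlying $S^1$-spectra are equivalent; but since both arise from the same $\ul{\KH}(\Perf_\CC)$-module structure on $\ul{\KH}(\Perf_X \otimes_\CC \Cscr)$, this compatibility is essentially automatic.
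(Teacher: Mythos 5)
Your proof is correct and follows essentially the same route as the paper: identify $\ul{\Map}_{\Sp(\CC)}(\Sigma^{\infty}_{\PP^1}X_+,\KGL(\Cscr))$ with $\KGL(\Perf_X\otimes_\CC\Cscr)$ and then apply the previous lemma to the dg category $\Perf_X\otimes_\CC\Cscr$. The only difference is that the paper compresses the identification into the phrase ``by adjunction,'' whereas you spell it out levelwise via $\Perf_{X\times_\CC Y}\we\Perf_X\otimes_\CC\Perf_Y$ and the compatibility of the Bott structure maps, which is a reasonable elaboration of the same step.
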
 

\begin{proof}
    By adjunction,
    $$\ul{\Map}_{\Sp(\CC)}(\Sigma^\infty_{\PP^1}X_+,\KGL(\Cscr))\we\KGL(\Perf_X\otimes_\CC\Cscr).$$
    The claim follows from the fact that ${\rm
    Re}(\KGL(\Perf_X\otimes_\CC\Cscr))\we\K^\top(\Perf_X\otimes_\CC\Cscr)\we\ul{\K}^\top(\Cscr)(X)$.
  
\end{proof}

\begin{lemma}\label{lem:cohthy}
	If $X\in \Sm_{\CC}$, then 
	$\ul{\K}^{\top}(\Cscr)(X)\wkeq \Map_{\Sp}(\Sigma^{\infty}X(\CC)_+, \K^{\top}(\Cscr))$.
\end{lemma}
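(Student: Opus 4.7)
The plan is to combine the preceding lemma with the symmetric monoidality of the topological realization $\Re:\Sp(\CC)\to\Sp$. The preceding lemma identifies
$$
\ul{\K}^\top(\Cscr)(X)\we\Re\bigl(\ul{\Map}_{\Sp(\CC)}(\Sigma^\infty_{\PP^1}X_+,\KGL(\Cscr))\bigr),
$$
so the task reduces to producing a natural equivalence between the right-hand side and $\Map_{\Sp}(\Sigma^\infty X(\CC)_+,\K^\top(\Cscr))$. Because $\Re$ is symmetric monoidal and colimit-preserving, there is a canonical assembly map
$$
\Re\bigl(\ul{\Map}_{\Sp(\CC)}(A,B)\bigr)\longrightarrow\Map_{\Sp}\bigl(\Re(A),\Re(B)\bigr)
$$
natural in $A,B\in\Sp(\CC)$ and an equivalence whenever $A$ is dualizable, since then $\ul{\Map}(A,B)\we A^\vee\otimes B$ and $\Re$ preserves the duality data. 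Specializing to $A=\Sigma^\infty_{\PP^1}X_+$ and $B=\KGL(\Cscr)$ and using the standard compatibility $\Re(\Sigma^\infty_{\PP^1}X_+)\we\Sigma^\infty X(\CC)_+$ together with $\Re(\KGL(\Cscr))\we\K^\top(\Cscr)$ from the earlier lemma, this specializes to the required comparison.

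When $X$ is smooth and projective, $\Sigma^\infty_{\PP^1}X_+$ is dualizable in $\Sp(\CC)$ by motivic Atiyah duality, so the assembly map is an equivalence. To extend to arbitrary $X\in\Sm_\CC$, I would argue that both functors of $X$ under comparison are Nisnevich-local and $\AA^1$-invariant presheaves of spectra on $\Sm_\CC$. The left-hand side inherits these properties from the localizing invariant $\ul{\KH}(\Cscr)$ used in the construction of $\KGL(\Cscr)$ (cf.\ the proof of Proposition~\ref{prop:KGL}). The right-hand side is Nisnevich-local because Nisnevich distinguished squares of smooth $\CC$-schemes realize to homotopy pushouts of complex-analytic spaces, and is $\AA^1$-invariant because $\AA^1_\CC(\CC)$ is contractible. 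Both functors therefore factor through $\Sp(\CC)$, and the comparison map is induced by a morphism of representing motivic spectra.

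The main obstacle will be bootstrapping from the smooth projective equivalence to arbitrary $X\in\Sm_\CC$. I would handle this either by cdh descent together with Hironaka's resolution of singularities, recognizing both functors as cdh hypersheaves and thereby reducing an equivalence on smooth projective generators to one on all of $\Sm_\CC$; or, in a more hands-on fashion, by using the $\KGL$-linearity of both sides and propagating the comparison through the projective bundle and blow-up (Gysin) formulas satisfied by any $\KGL$-module theory.
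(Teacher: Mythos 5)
Your core argument is the paper's argument: the realization $\Re:\Sp(\CC)\to\Sp$ is symmetric monoidal, hence the assembly map $\Re(\ul{\Map}_{\Sp(\CC)}(A,B))\to\Map_{\Sp}(\Re(A),\Re(B))$ is an equivalence whenever $A$ is dualizable, and combining this with the previous lemma and the compatibility $\Re(\Sigma^\infty_{\PP^1}X_+)\we\Sigma^\infty X(\CC)_+$ gives the statement. The only divergence is that the paper invokes dualizability of $\Sigma^\infty_{\PP^1}X_+$ for \emph{every} $X\in\Sm_\CC$, not only for projective $X$: over a field of characteristic zero this is a known theorem (due to Riou, Levine, and others), proved using resolution of singularities --- the dualizable objects form a thick subcategory of $\Sp(\CC)$ containing the smooth projective suspension spectra and their Tate twists, and these generate all $\Sigma^\infty_{\PP^1}X_+$ with $X$ smooth. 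Granting that fact, the lemma is already finished after your second paragraph, and the bootstrapping in your last two paragraphs is unnecessary.

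Since you do propose a bootstrap, one caution: the cdh-descent route has a genuine gap. The functor $X\mapsto\Re(\ul{\Map}_{\Sp(\CC)}(\Sigma^\infty_{\PP^1}X_+,\KGL(\Cscr)))$ is obtained by applying the left adjoint $\Re$ to the objects that would compute descent, and $\Re$ does not commute with the limits appearing in a (hyper)descent condition; moreover the presheaves under comparison are only defined on $\Sm_\CC$, whereas running cdh descent against resolution of singularities requires values on singular schemes. So you cannot recognize the left-hand side as a cdh hypersheaf by formal nonsense. The workable version of your bootstrap is to note that $A\mapsto\Re(\ul{\Map}_{\Sp(\CC)}(A,\KGL(\Cscr)))$ and $A\mapsto\Map_{\Sp}(\Re(A),\K^\top(\Cscr))$ are both exact in $A$ and the assembly map is natural, so the full subcategory of those $A$ for which it is an equivalence is thick; showing it contains all $\Sigma^\infty_{\PP^1}X_+$ with $X$ smooth then requires exactly the same resolution-of-singularities input as the general dualizability theorem. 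In other words, your second proposed route, carried out carefully, amounts to reproving that $\Sigma^\infty_{\PP^1}X_+$ is dualizable for all smooth $X$, which is the single fact the paper cites to close the proof.
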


\begin{proof}
    Because the functor ${\rm Re}$ is symmetric monoidal, it
    commutes with internal mapping objects. Since $\Sigma^{\infty}_{\PP^1}X_+$ is dualizable
    in $\Sp(\CC)$, the statement of the lemma follows from the previous lemma.
\end{proof}

\begin{proof}[Proof of Theorem~\ref{thm:descent}]
    It follows from the equivalence $\ul{\K}(\Cscr)/n\wkeq
    \ul{\K}^{\st}(\Cscr)/n$ of Theorem~\ref{thm:blancsconjecture} that the second part follows from the first part.
    On the other hand, Lemma~\ref{lem:cohthy} shows that
    $\ul{\K}^{\top}(\Cscr)$ is the restriction of the cohomology theory on
    spaces represented by $\K^{\top}(\Cscr)$ to $\Sch_\CC$. It follows that it
    satisfies \'etale hyperdescent since any cohomology theory does (see for
    example~\cite{dugger-isaksen}*{Theorem~5.2}).
\end{proof}


\addcontentsline{toc}{section}{References}

\begin{bibdiv}
\begin{biblist}

\bib{blanc}{article}{
    author={Blanc, Anthony},
    title={Topological K-theory of complex noncommutative spaces},
    journal={Compos. Math.},
    volume={152},
    date={2016},
    number={3},
    pages={489--555},
    issn={0010-437X},
}


\bib{bgt1}{article}{
    author={Blumberg, Andrew J.},
    author={Gepner, David},
    author={Tabuada, Gon{\c{c}}alo},
    title={A universal characterization of higher algebraic
    $K$-theory},
    journal={Geom. Topol.},
    volume={17},
    date={2013},
    number={2},
    pages={733--838},
    issn={1465-3060},
}

\bib{cohn}{article}{
    author = {Cohn, Lee},
    title = {Differential Graded Categories are k-linear Stable Infinity Categories},
    journal = {ArXiv e-prints},
    eprint = {http://arxiv.org/abs/1308.2587},
    year = {2013},
}

\bib{dugger-isaksen}{article}{
    author={Dugger, Daniel},
    author={Isaksen, Daniel C.},
    title={Topological hypercovers and $\Bbb A^1$-realizations},
    journal={Math. Z.},
    volume={246},
    date={2004},
    number={4},
    pages={667--689},
    issn={0025-5874},
}

\bib{friedlander-walker-comparing}{article}{
    author={Friedlander, Eric M.},
    author={Walker, Mark E.},
    title={Comparing $K$-theories for complex varieties},
    journal={Amer. J. Math.},
    volume={123},
    date={2001},
    number={5},
    pages={779--810},
    issn={0002-9327},
}

\bib{friedlander-walker}{article}{
    author={Friedlander, Eric M.},
    author={Walker, Mark E.},
    title={Semi-topological $K$-theory using function complexes},
    journal={Topology},
    volume={41},
    date={2002},
    number={3},
    pages={591--644},
    issn={0040-9383},
}

\bib{friedlander-walker-handbook}{article}{
    author={Friedlander, Eric M.},
    author={Walker, Mark E.},
    title={Semi-topological $K$-theory},
    conference={
    title={Handbook of $K$-theory. Vol. 1, 2},
    },
    book={
    publisher={Springer, Berlin},
    },
    date={2005},
    pages={877--924},
}

\bib{hornbostel-yagunov}{article}{
    author={Hornbostel, Jens},
    author={Yagunov, Serge},
    title={Rigidity for Henselian local rings and $\mathbb{A}^1$-representable theories},
    journal={Math. Z.},
    volume={255},
    date={2007},
    number={2},
    pages={437--449},
    issn={0025-5874},
}

\bib{jouanolou}{article}{
    author={Jouanolou, J. P.},
    title={Une suite exacte de Mayer-Vietoris en $K$-th\'eorie alg\'ebrique},
    conference={
    title={Algebraic $K$-theory, I: Higher $K$-theories (Proc. Conf.,
    Battelle Memorial Inst., Seattle, Wash., 1972)},
    },
    book={
    publisher={Springer},
    place={Berlin},
    },
    date={1973},
    pages={293--316. Lecture
    Notes in Math., Vol.
    341},
}

\bib{htt}{book}{
      author={Lurie, Jacob},
       title={Higher topos theory},
      series={Annals of Mathematics Studies},
   publisher={Princeton University Press},
     address={Princeton, NJ},
        date={2009},
      volume={170},
        ISBN={978-0-691-14049-0; 0-691-14049-9},
}

  \bib{morel-voevodsky}{article}{
      author={Morel, Fabien},
      author={Voevodsky, Vladimir},
      title={${\bf A}^1$-homotopy theory of schemes},
      journal={Inst. Hautes \'Etudes Sci. Publ. Math.},
      number={90},
      date={1999},
      pages={45--143 (2001)},
      issn={0073-8301},
  }

\bib{robalo}{article}{
    author={Robalo, Marco},
    title={$K$-theory and the bridge from motives to noncommutative
    motives},
    journal={Adv. Math.},
    volume={269},
    date={2015},
    pages={399--550},
    issn={0001-8708},
}

\bib{tabuada-kh}{article}{
    author={Tabuada, Gon\c{c}alo},
    title={$\bold{A}^1$-homotopy theory of noncommutative motives},
    journal={J. Noncommut. Geom.},
    volume={9},
    date={2015},
    number={3},
    pages={851--875},
    issn={1661-6952},
}

\bib{tabuada-a1-modl}{article}{
    author={Tabuada, Gon\c{c}alo},
    title={$\Bbb{A}^1$-homotopy invariance of algebraic $K$-theory with
    coefficients and du Val singularities},
    journal={Ann. K-Theory},
    volume={2},
    date={2017},
    number={1},
    pages={1--25},
    issn={2379-1683},
}


\bib{thomason}{article}{
    author={Thomason, R. W.},
    title={Algebraic $K$-theory and \'etale cohomology},
    journal={Ann. Sci. \'Ecole Norm. Sup. (4)},
    volume={18},
    date={1985},
    number={3},
    pages={437--552},
    issn={0012-9593},
    review={\MR{826102}},
}

\bib{thomason-bott}{article}{
    author={Thomason, R. W.},
    title={Bott stability in algebraic $K$-theory},
    conference={
    title={Applications of algebraic $K$-theory to algebraic
    geometry and
    number theory, Part I, II},
    address={Boulder, Colo.},
    date={1983},
    },
    book={
    series={Contemp.
    Math.},
    volume={55},
    publisher={Amer.
    Math.
    Soc.,
    Providence,
    RI},
    },
    date={1986},
    pages={389--406},
}

\bib{weibel-kh}{article}{
author={Weibel, Charles A.},
title={Homotopy algebraic $K$-theory},
conference={
title={Algebraic $K$-theory and algebraic number theory
(Honolulu, HI,
1987)},
},
book={
series={Contemp. Math.},
volume={83},
publisher={Amer.
Math. Soc.,
Providence,
RI},
},
date={1989},
pages={461--488},
}

\end{biblist}
\end{bibdiv}

%

\end{document}